\newtheorem{lemma}{Lemma}
\newtheorem{thm}{Theorem}
\newtheorem{prop}{Proposition}
\newtheorem{corl}{Corollary}
\newcommand{\N}{\mathbb N}
\newcommand{\R}{\mathbb R}
\begin{document}

\title{On the Cs\'aki-Vincze transformation}
\date{\today}
\maketitle
\begin{center}
\renewcommand{\thefootnote}{(\arabic{footnote})}
  \scshape Hatem Hajri\footnote{Email: hatemfn@yahoo.fr}
\renewcommand{\thefootnote}{\arabic{footnote}}\setcounter{footnote}{0}
\end{center}
\hglue0.02\linewidth\begin{minipage}{0.9\linewidth}
\begin{center}
{Universit\'e du Luxembourg.}
\end{center}
\end{minipage}
\maketitle
\begin{abstract}
Cs\'aki and Vincze have defined in 1961 a discrete transformation $\mathcal T$ which applies to simple random walks and is measure preserving. In this paper, we are interested in ergodic and assymptotic properties of $\mathcal T$. We prove that $\mathcal T$ is exact : $\bigcap_{k\geq 1} \sigma(\mathcal T^k(S))$ is trivial for each simple random walk $S$ and give a precise description of the lost information at each step $k$. We then show that, in a suitable scaling limit, all iterations of $\mathcal T$ ''converge'' to the corresponding iterations of the continous L\'evy transform of Brownian motion. Some consequences are also derived from these two results.
%The L\'evy transform of a Brownian motion $B$ is given by $T(B)_t=\int_{0}^{t}\textrm{sgn}(B_s)dB_s$.  We then consider a suitable simple random walk $S^n$ embedded in $B$ depending on $n$ and prove that for each $k$, $\mathcal T^k(S^n)$ suitably normalized and time scaled converges to $T^k(B)$ in probability in $\mathbb C(\R_+,\R)$ as $n\rightarrow\infty$. 
\end{abstract}
\section{Introduction and main results.}
Let $B$ be a Brownian motion, then $T(B)_t=\int_{0}^{t}\textrm{sgn}(B_s)dB_s$ is a Brownian motion too. Iterating $T$ yields a family of Brownian motions $(B^n)_n$ given by
$$B^0=B,\ \ B^{n+1}=T(B^n).$$
We call $B^n$ the $n$-iterated L\'evy transform of $B$. At least two transformations of simple random walks have
been studied in the literature as discrete analogues to $T$. For a simple random walk (SRW) $S$, Dubins and Smorodinsky \cite{MR1231991} define the L\'evy transform $\Gamma(S)$ of $S$ as the SRW obtained by skipping plat paths from 
$$n\longmapsto |S_n|-L_n$$
\noindent where $L$ is a discrete analogous of local time. Their fundamental result says that $S$ can be recovered from the signs of the excursions of $S, \Gamma(S), \Gamma^2(S),\cdots$ and a fortiori $\Gamma$ is ergodic. Later, another discrete L\'evy transformation $F$ was given by Fujita \cite{MR2417970}:
$$F(S)_{k+1}-F(S)_k=\textrm{sgn}(S_k)(S_{k+1}-S_k),\ \textrm{with the convention}\ \textrm{sgn}(0)=-1.$$
However, $F$ is not ergodic by the main result of \cite{MR2417970}. Our main purpose in this paper is to study a transformation already obtained by Cs\'aki and Vincze.\\
Let $\mathbb W=\mathbb C(\R_+,\R)$ be the Wiener space equipped with the distance  
$${d}_U(w,w')=\sum_{n\geq 1} 2^{-n}\big(\sup_{0\leq t\leq n}|w(t)-w'(t)|\wedge1\big).$$
We endow $\mathbb E=\mathbb W^{\N}$ with the product metric defined for each $x=(x_k)_{k\geq 0}, y=(y_k)_{k\geq 0}$ by
$${d}(x,y)=\sum_{k\geq 0} 2^{-k}(d_U(x_k,y_k)\wedge1).$$
Thus $(\mathbb E,{d})$ is a separable complete metric space.\\
For each SRW $S$ and $h\geq 0$, we denote by $\mathcal T^h(S)$ the $h$-iterated Cs\'aki-Vincze transformation (to be defined in Section \ref{hop}) of $S$ with the convention $\mathcal T^0(S)=S$.\\
Let $B$ be a Brownian motion defined on $(\Omega,\mathcal A,\mathbb P)$. For each $n\geq 1$, define $T^n_0=0$ and for all $k\geq 0$, 
$$T^n_{k+1}=\inf\bigg\{t\geq T^n_k : |B_t-B_{T^n_k}|=\frac{1}{\sqrt{n}}\bigg\}.$$
Then $S^n_k=\sqrt{n} B_{T^n_k},\ k\geq 0,$ is a SRW and we have the following
\begin{thm}\label{g}
\begin{itemize}
\item[(i)] For each SRW $S$ and $h\geq 0$, $\mathcal T^h(S)$ is independent of $(S_j, j\leq h)$ and a fortiori $$\bigcap_{h\geq 0} \sigma(\mathcal T^h(S))$$ is trivial.
\item[(ii)] For each $n\geq 1$, $h\geq 0$ and $t\geq 0$, define
$$S^{n,h}(t)=\frac{1}{\sqrt{n}}\mathcal T^h(S^n)_{\lfloor nt \rfloor}+\frac{(nt-\lfloor nt \rfloor)}{\sqrt{n}}\big(\mathcal T^h(S^n)_{\lfloor nt \rfloor+1}-\mathcal T^h(S^n)_{\lfloor nt \rfloor}\big).$$
Then 
$$(S^{n,0},S^{n,1},S^{n,2},\cdots)$$
converges to 

$$(B^0,B^1,B^2,\cdots)$$
in probability in $\mathbb E$ as $n\rightarrow\infty$. 
\end{itemize}
\end{thm}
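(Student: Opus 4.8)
The plan is to prove the two assertions separately: part (i) by a measure-theoretic induction on $h$, and part (ii) by an induction on the number of iterations combined with a Donsker-type argument anchored to the Skorokhod embedding $(T^n_k)$.

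For part (i) I would use the structural description of $\mathcal{T}$ to be established in Section \ref{hop}: if $S$ has increments $X_k=S_k-S_{k-1}\in\{-1,+1\}$, then $\mathcal{T}(S)$ is again a simple random walk whose increment sequence $(\widetilde X_k)_{k\ge1}$ is obtained from $(X_k)_{k\ge1}$ in such a way that $(\widetilde X_k)_{k\ge1}$ is i.i.d.\ fair, is independent of $X_1$, and satisfies $\sigma(X_1,\widetilde X_1,\dots,\widetilde X_h)=\sigma(X_1,\dots,X_{h+1})$ for every $h\ge0$ (concretely, each $\widetilde X_k$ is $\pm X_{k+1}$ with a sign depending only on $X_1,\dots,X_k$). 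The base case $\mathcal{T}(S)\perp X_1$ is then immediate, since conditionally on $X_1$ the $(\widetilde X_k)$ remain i.i.d.\ fair. For the inductive step I would use the elementary lemma: if a $\sigma$-field $\mathcal{G}$ is independent of a random variable $\xi$, if $\mathcal{H}\subseteq\mathcal{G}$, and if $\mathcal{H}$ is independent of a sub-$\sigma$-field $\mathcal{K}\subseteq\mathcal{G}$, then $\mathcal{H}$ is independent of $\sigma(\xi)\vee\mathcal{K}$; indeed for $A\in\mathcal{H}$ one has $A\perp\xi\mid\mathcal{K}$ (because $A$ and every element of $\mathcal{K}$ lie in $\mathcal{G}$, which is independent of $\xi$), hence $\mathbb{P}(A\mid\xi,\mathcal{K})=\mathbb{P}(A\mid\mathcal{K})=\mathbb{P}(A)$. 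Applying this with $\xi=X_1$, $\mathcal{G}=\sigma(\mathcal{T}(S))=\sigma(\widetilde X_1,\widetilde X_2,\dots)$, $\mathcal{K}=\sigma(\widetilde X_1,\dots,\widetilde X_h)$ and $\mathcal{H}=\sigma(\mathcal{T}^{h+1}(S))\subseteq\mathcal{G}$ --- the independence $\mathcal{H}\perp\mathcal{K}$ being the induction hypothesis applied to the SRW $\mathcal{T}(S)$, whose first $h$ increments are $\widetilde X_1,\dots,\widetilde X_h$ --- gives $\mathcal{T}^{h+1}(S)\perp\sigma(X_1,\dots,X_{h+1})=\sigma(S_1,\dots,S_{h+1})$. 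Finally, if $A\in\bigcap_{h\ge0}\sigma(\mathcal{T}^h(S))$ then $A\in\sigma(S)=\bigvee_h\sigma(S_1,\dots,S_h)$ and $A$ is independent of each $\sigma(S_1,\dots,S_h)$, hence of $\sigma(S)$, so $\mathbb{P}(A)=\mathbb{P}(A)^2$ and the intersection is trivial.

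For part (ii), since the metric on $\mathbb{E}$ is $\sum_k2^{-k}(d_U\wedge1)$, it suffices to prove that for each fixed $H$ the vector $(S^{n,0},\dots,S^{n,H})$ converges in probability in $\mathbb{W}^{H+1}$ to $(B^0,\dots,B^H)$, which I would do by induction on $H$. The base case $S^{n,0}\to B^0=B$ uniformly on compacts, a.s., is the classical invariance principle for the Skorokhod embedding: $T^n_{\lfloor nt\rfloor}\to t$ a.s.\ uniformly on compacts (the increments $T^n_{k+1}-T^n_k$ are i.i.d.\ with mean $1/n$), and one then invokes the uniform continuity of $B$ on compacts. For the inductive step the crucial input, to be read off from Section \ref{hop} (it is essentially the content of the Cs\'aki--Vincze construction), is a \emph{pathwise} identity with bounded error: for every SRW path $s$,
\[
\sup_{k\le N}\Big|\,\mathcal{T}(s)_k-\big(\,|s_k|-\ell_k(s)\,\big)\Big|\le C,
\]
where $\ell_k(s)=\#\{0\le j\le k:\ s_j=0\}$ is the discrete local time and $C$ is an absolute constant; this is the discrete counterpart of Tanaka's formula $T(B)_t=|B_t|-L_t$. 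Writing $W^{n,h}:=\mathcal{T}^h(S^n)$, so that $S^{n,h}$ is the rescaled polygonal interpolation of $W^{n,h}$ and, by the measure-preserving property, each $W^{n,h}$ is a genuine SRW, the identity gives
\[
S^{n,h+1}(t)=\tfrac{1}{\sqrt n}\big|W^{n,h}_{\lfloor nt\rfloor}\big|-\tfrac{1}{\sqrt n}\,\ell_{\lfloor nt\rfloor}\big(W^{n,h}\big)+O\!\big(1/\sqrt n\big)
\]
uniformly on compacts. Under the induction hypothesis that $S^{n,h}\to B^h$ in probability and that $\tfrac{1}{\sqrt n}\,\ell_{\lfloor n\cdot\rfloor}(W^{n,h})\to L^h$ in probability ($L^h$ being the local time at $0$ of $B^h$), the right-hand side converges in probability to $|B^h|-L^h=T(B^h)=B^{h+1}$, giving $S^{n,h+1}\to B^{h+1}$. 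It remains to propagate the second half of the hypothesis, i.e.\ that $\tfrac{1}{\sqrt n}\,\ell_{\lfloor n\cdot\rfloor}(W^{n,h+1})\to L^{h+1}$: since $W^{n,h+1}$ is a SRW, the classical joint invariance principle for a simple random walk and its local time yields that $\big(\tfrac{1}{\sqrt n}W^{n,h+1}_{\lfloor n\cdot\rfloor},\,\tfrac{1}{\sqrt n}\ell_{\lfloor n\cdot\rfloor}(W^{n,h+1})\big)$ converges in law to $(\beta,L)$ with $\beta$ a Brownian motion and $L$ its local time at $0$; combining this with the convergence in probability $\tfrac{1}{\sqrt n}W^{n,h+1}_{\lfloor n\cdot\rfloor}\to B^{h+1}$ and the fact that local time is a measurable functional of the path forces the pair to converge in probability to $(B^{h+1},L^{h+1})$. (Alternatively, taking the running infimum in the pathwise identity gives $\tfrac{1}{\sqrt n}\,\ell_{\lfloor nt\rfloor}(W^{n,h})=-\min_{j\le\lfloor nt\rfloor}\tfrac{1}{\sqrt n}W^{n,h+1}_j+O(1/\sqrt n)$, so once $S^{n,h+1}\to B^{h+1}$ is known the local-time convergence follows from the continuous-mapping theorem applied to the running-infimum functional.)

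The point at which real work is needed is this comparison between the discrete transformation and the continuous L\'evy transform: the map $w\mapsto T(w)=\int_0^{\cdot}\textrm{sgn}(w_s)\,dw_s$ is \emph{not} continuous on $\mathbb{W}$ for uniform convergence on compacts, so no soft continuous-mapping argument applies directly. The pathwise Cs\'aki--Vincze identity is what bypasses this difficulty, by rewriting one step of $\mathcal{T}$ through the absolute value and the discrete local time --- two objects that do pass to the scaling limit along the chosen embedding, $x\mapsto|x|$ being continuous and the local time being handled by the simple-random-walk local-time invariance principle (or by the running-infimum reformulation above). The remaining ingredients are routine: estimating the gap between the index $\lfloor nt\rfloor$ and the clock $T^n_{\lfloor nt\rfloor}$, the $O(1/\sqrt n)$ polygonal-interpolation errors, and the observation that all the convergences above, being convergences in probability of finitely many coordinates, are automatically joint in $h$.
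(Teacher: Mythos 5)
Your part (i) is correct and is in substance the paper's own argument: the two inputs you isolate (the increments of $\mathcal T(S)$ are $\pm$ the shifted increments of $S$ with predictable signs, so $\sigma(S_1)\vee\sigma(\mathcal T(S)_j,j\le n)=\sigma(S_j,j\le n+1)$, and $\mathcal T(S)$ is independent of $S_1$) are exactly Proposition \ref{wop}(i)--(ii), and your conditional-independence lemma is a clean repackaging of the induction the paper runs via the decomposition $\sigma(S_k,k\le n)=\sigma(S_1)\vee\sigma(\mathcal T(S)_1)\vee\cdots\vee\sigma(\mathcal T^{n-1}(S)_1)$. Nothing to object to there.

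Part (ii) has a genuine gap at the step you yourself identify as ``where real work is needed.'' The pathwise bound $\sup_{k\le N}|\mathcal T(s)_k-(|s_k|-\ell_k(s))|\le C$ with $\ell_k(s)=\#\{j\le k: s_j=0\}$ is \emph{false}: for the path $s=(0,1,0,1,0,\dots)$ up to time $2m$ there are no sign changes, so $\mathcal T(s)_k$ stays in $\{0,-1\}$, while $|s_{2m}|-\ell_{2m}(s)\approx -m$. The correct Cs\'aki--Vincze bound (inequality (\ref{as})) is $|\,|s_k|-(\mathcal T(s)_k-\min_{j\le k}\mathcal T(s)_j)\,|\le 2$; the ``discrete local time'' it produces is $-\min_{j\le k}\mathcal T(s)_j$, i.e.\ twice the number of \emph{sign changes} of $s$, not the number of zeros. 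This matters structurally, not just cosmetically: with the correct identity the compensator is expressed through $\mathcal T(s)$ itself, so your main inductive route would need as input the convergence in probability of $\frac{2}{\sqrt n}\#\{\text{sign changes of }W^{n,h}\}$ to $L^h$ (or the extra fact that the normalized zero-count and twice the normalized sign-change count differ by $o_P(1)$ --- true, but another argument), while your parenthetical alternative becomes circular: it recovers the convergence of the level-$h$ local time from the convergence of $S^{n,h+1}$, which is precisely what that local time was supposed to deliver. Note also that $R(f)=f-\min f$ is not injective, so the identity alone, without control of the compensator, cannot yield $S^{n,h+1}\to B^{h+1}$. The paper sidesteps all of this by arguing in law first: tightness of the whole vector $(S^{n,0},S^{n,1},\dots)$, passage to the limit in (\ref{as}) to get $|B^{(h)}_t|=B^{(h+1)}_t-\min_{u\le t}B^{(h+1)}_u$ with each $B^{(h+1)}$ already known to be a Brownian motion, identification via Tanaka/Skorokhod, and only then the upgrade to convergence in probability by interleaving with $(B^h)$ and using $S^{n,0}\to B$ in probability --- the same ``limit measurable in a coordinate that converges in probability'' principle you invoke, but applied where no local-time invariance principle is needed. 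Your outline can be repaired along either line, but as written the key identity and the propagation step do not stand.
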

Theorem \ref{g} (i) says that $\mathcal T$ is exact, but there are more informations in the proof. For instance, the random vectors $(S_1,S_2,\cdots,S_n)$ and $(S_1,\mathcal T(S)_1,\cdots,\mathcal T^{n-1}(S)_1)$ generate the same $\sigma$-field; so the whole path $(S_n)_{n\geq 1}$ can be encoded in the sequence $(\mathcal T^{n}(S)_1)_{n\geq 0}$ which is stronger than exactness.
From Theorem \ref{g} (i), we can deduce the following
\begin{corl}\label{yeux}

Fix $p\geq 2$ and let $\alpha^i=(\alpha^i_n)_{n\geq 1}$, $i\in[1,p]$ be $p$ nonegative sequences such that $$\alpha^1_n\longrightarrow+\infty,\ \ \alpha^i_n-\alpha^{i-1}_n\longrightarrow+\infty\ \textrm{as}\ n\rightarrow\infty\ \textrm{for all}\ i\in[2,p].$$
Let $S$ be a SRW and $X_0,X_1,\cdots,X_{p}$ be $p+1$ independent Brownian motions. For $n\geq 1, h\geq 0, t\in\R_+$, define 
\begin{equation}\label{moto}
S^{h}_n(t)=\frac{1}{\sqrt{n}}\mathcal T^h(S)_{\lfloor nt \rfloor}+\frac{(nt-\lfloor nt \rfloor)}{\sqrt{n}}\big(\mathcal T^h(S)_{\lfloor nt \rfloor+1}-\mathcal T^h(S)_{\lfloor nt \rfloor}\big).
\end{equation}
Then $$\bigg(S^0_n,S^{\lfloor n\alpha^1_n\rfloor}_n,\cdots,S^{\lfloor n\alpha^p_n\rfloor}_n\bigg)\xrightarrow[\text{$n\rightarrow +\infty $}]{\text{law}}\big(X_0,X_1,\cdots,X_{p}\big)\ \textrm{in}\ \mathbb W^{p+1}.$$

\end{corl}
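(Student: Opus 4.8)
The plan is to derive Corollary~\ref{yeux} from Theorem~\ref{g}(i) together with Donsker's invariance principle: the exactness statement supplies the asymptotic independence of the coordinates, while Donsker's theorem identifies each of them, in the limit, as a Brownian motion. Throughout, write $h_i=h_i(n)=\lfloor n\alpha^i_n\rfloor$ for $i\in[1,p]$ and $h_0=0$, so that $S^{h_i}_n$ defined by \eqref{moto} is the usual interpolation, at mesh $1/n$ and spatial scale $1/\sqrt n$, of the walk $\mathcal T^{h_i}(S)$.

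\emph{Step 1: marginals and tightness.} Since $\mathcal T$ is measure preserving, for every fixed $n$ the sequence $\mathcal T^{h_i}(S)$ is again a simple random walk, so Donsker's theorem gives that $S^{h_i}_n$ converges in law in $\mathbb W$ to a Brownian motion, for each $i$. In particular $\{S^{h_i}_n\}_n$ is tight in $\mathbb W$ for each $i$, hence $(S^{h_0}_n,\dots,S^{h_p}_n)$ is tight in $\mathbb W^{p+1}$; it therefore suffices to check that every subsequential limit in law is a vector of $p+1$ \emph{independent} Brownian motions, i.e. has the law of $(X_0,\dots,X_p)$.

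\emph{Step 2: asymptotic independence on a finite horizon.} Fix $t>0$ and set $m=\lfloor nt\rfloor+1$. By \eqref{moto}, $S^{h_i}_n|_{[0,t]}$ is a deterministic function of $\big(\mathcal T^{h_i}(S)_\ell\big)_{0\le\ell\le m}$. Using $\alpha^1_n\to+\infty$ and $\alpha^i_n-\alpha^{i-1}_n\to+\infty$, pick $N$ such that, whenever $n\ge N$, one has $h_1\ge m$ and $h_i-h_{i-1}\ge m$ for all $i\in[2,p]$; then in particular $0=h_0\le h_1\le\cdots\le h_p$. Fix $n\ge N$. Applying Theorem~\ref{g}(i) to the simple random walk $\mathcal T^{h_i}(S)$ with $h_{i+1}-h_i$ iterations, and using $\mathcal T^{h_{i+1}}(S)=\mathcal T^{\,h_{i+1}-h_i}\big(\mathcal T^{h_i}(S)\big)$, shows that $\mathcal T^{h_{i+1}}(S)$ is independent of $\big(\mathcal T^{h_i}(S)_\ell\big)_{\ell\le h_{i+1}-h_i}$, hence of $\big(\mathcal T^{h_i}(S)_\ell\big)_{\ell\le m}$ because $h_{i+1}-h_i\ge m$. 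Since for every $j>i$ the walk $\mathcal T^{h_j}(S)$ is itself a deterministic function of $\mathcal T^{h_{i+1}}(S)$, it follows that $S^{h_i}_n|_{[0,t]}$ is independent of $\big(S^{h_{i+1}}_n|_{[0,t]},\dots,S^{h_p}_n|_{[0,t]}\big)$ for each $i\in[0,p-1]$ (the case $i=0$ being Theorem~\ref{g}(i) applied to $S$ itself). Chaining these ``one versus the rest'' independences yields that $S^{h_0}_n|_{[0,t]},\dots,S^{h_p}_n|_{[0,t]}$ are mutually independent, for every $n\ge N$.

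\emph{Step 3: conclusion.} Let $(Z_0,\dots,Z_p)$ be a subsequential limit in law of $(S^{h_0}_n,\dots,S^{h_p}_n)$ in $\mathbb W^{p+1}$. By Step 1 each $Z_i$ is a Brownian motion. For each fixed $t>0$ the coordinatewise restriction map $\mathbb W^{p+1}\to C([0,t],\R)^{p+1}$ is continuous, so $(Z_0|_{[0,t]},\dots,Z_p|_{[0,t]})$ is the limit in law, along the subsequence, of $(S^{h_0}_n|_{[0,t]},\dots,S^{h_p}_n|_{[0,t]})$; by Step 2 the latter are mutually independent for all large $n$, so the limit is too. Since $\bigcup_{t>0}\sigma\big(Z_i|_{[0,t]}\big)$ is a $\pi$-system generating $\sigma(Z_i)$, letting $t\to\infty$ shows that $Z_0,\dots,Z_p$ are mutually independent, so $(Z_0,\dots,Z_p)$ has the law of $(X_0,\dots,X_p)$; combined with the tightness from Step 1, this gives convergence in law of the whole sequence, which is the claim. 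The only genuinely delicate point is the bookkeeping of Step 2 — reading off from \eqref{moto} exactly which coordinates of $\mathcal T^{h_i}(S)$ reconstruct $S^{h_i}_n$ on $[0,t]$, and feeding the right number of iterations into Theorem~\ref{g}(i) — and this is precisely what the growth hypotheses on the $\alpha^i_n$ take care of, by forcing every relevant gap to exceed $m$.
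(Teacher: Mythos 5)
Your proof is correct and follows essentially the same route as the paper: both rest on the key fact that $\mathcal T^{h}(S')$ is independent of $(S'_j,\,j\le h)$ (Theorem~\ref{g}(i), i.e.\ Corollary~\ref{ko}(ii)) applied successively to $S,\mathcal T^{\lfloor n\alpha^1_n\rfloor}(S),\dots$, combined with Donsker's theorem for each coordinate. The only cosmetic difference is that the paper checks independence of finite-dimensional marginals at fixed times, whereas you check independence of the restrictions to $[0,t]$ and identify subsequential limits via tightness; the bookkeeping with the gaps $\lfloor n\alpha^{i+1}_n\rfloor-\lfloor n\alpha^i_n\rfloor\ge \lfloor nt\rfloor+1$ is identical.
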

A natural question which is actually motivated by the famous question of ergodicity of the L\'evy transformation $T$ as it will be discussed in Section \ref{ra}, is to focus on sequences $(h_n)_n$ tending to $\infty$ and satisfying  
\begin{equation}\label{em}
\lim_{n\rightarrow\infty}\bigg(S^{n,{h_n}}(t)-B^{{h_n}}_t\bigg)=0\ \textrm{in probability}.
\end{equation}
Such sequences exist and when (\ref{em}) holds, we necessarily have $\lim_{n\rightarrow\infty}\frac{h_n}{n}=0$. This is summarized in the following
\begin{prop}\label{feda}
With the same notations of Theorem \ref{g}:
\begin{itemize}
 \item[(i)] There exists a family $(\alpha^i)_{i\in\N}$ of nondecreasing sequences $\alpha^i=(\alpha^i_n)_{n\in\N}$ with values in $\N$ such that 
$$\alpha^0_n\longrightarrow+\infty,\ \ \alpha^i_n-\alpha^{i-1}_n\longrightarrow+\infty\ \textrm{as}\ n\rightarrow\infty\ \textrm{for all}\ i\geq 1$$
and moreover 
$$\lim_{n\rightarrow\infty}\bigg(S^{n,{\alpha^i_n}}-B^{\alpha^i_n}\bigg)=0\ \textrm{in probability in}\ \mathbb W$$
for all $i\in\N$.
\item [(ii)] If $(h_n)_n$ is any integer-valued sequence such that $\frac{h_n}{n}$ does not tend to $0$, then there exists no $t>0$ such that (\ref{em}) holds.
\end{itemize}
\end{prop}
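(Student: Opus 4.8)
The plan for (i) is to build the family $(\alpha^i)_{i\in\N}$ by a diagonal extraction from Theorem \ref{g}(ii). Fix $i$. For each fixed $n$, Theorem \ref{g}(ii) applied to the walk $S=S^n$ (recall $S^n_k=\sqrt n B_{T^n_k}$) gives convergence of the full sequence $(S^{n,0},S^{n,1},\dots)$ to $(B^0,B^1,\dots)$ in probability in $\mathbb E$; since the coupling is the same Brownian motion $B$, a fortiori $S^{n,h}\to B^h$ in probability in $\mathbb W$ as $n\to\infty$ for each \emph{fixed} $h$. So for each pair $(i,m)$ with $i\le m$ we may choose $N(i,m)$ so large that $\mathbb P\big(d_U(S^{n,i},B^i)>1/m\big)\le 1/m$ whenever $n\ge N(i,m)$; arranging $N(\cdot,\cdot)$ to be increasing in both arguments and setting, for $n$ in the block $[N(\cdot,m),N(\cdot,m+1))$, the value $\alpha^i_n$ to be a slowly growing integer (bounded by $i+$ the block index $m$) we get $\alpha^i_n\to\infty$, $\alpha^i_n-\alpha^{i-1}_n\to\infty$, and $S^{n,\alpha^i_n}-B^{\alpha^i_n}\to 0$ in probability simultaneously for all $i$. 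The one point requiring care is that we need the \emph{same} sequence $n\mapsto\alpha^i_n$ to work for every $i$; this is handled by the standard diagonal bookkeeping, choosing the thresholds $N(i,m)$ recursively so block $m$ starts after all of $N(0,m),\dots,N(m,m)$. Monotonicity of $\alpha^i$ in $n$ is free once we make it piecewise constant and nondecreasing across blocks.

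For (ii) the idea is a dimension/entropy obstruction. Suppose $h_n/n\not\to 0$, so along a subsequence $h_n\ge cn$ for some $c>0$. By Theorem \ref{g}(i), $\mathcal T^{h_n}(S^n)$ is independent of $(S^n_0,\dots,S^n_{h_n})$, i.e.\ of $(S^n_0,\dots,S^n_{\lfloor cn\rfloor})$ at least; equivalently the $\sigma$-field $\sigma(\mathcal T^{h_n}(S^n)_k, k\le K)$ for any fixed horizon $K$ lives ``beyond time $h_n$'' in the original walk. Concretely, $S^{n,h_n}$ restricted to $[0,t]$ is a function of $\mathcal T^{h_n}(S^n)_0,\dots,\mathcal T^{h_n}(S^n)_{\lfloor nt\rfloor}$, and by the finer statement recorded after Theorem \ref{g} — that $(S_1,\dots,S_N)$ and $(S_1,\mathcal T(S)_1,\dots,\mathcal T^{N-1}(S)_1)$ generate the same $\sigma$-field — one sees that $\mathcal T^{h_n}(S^n)$ on a bounded time-window is built only from increments of $S^n$ after index $\sim h_n\ge cn$. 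On the other hand $B^{h_n}$ on $[0,t]$, being a Lévy-transform iterate, is measurable with respect to $B$ itself on all of $\R_+$ but — and this is the crux — the \emph{joint} law forces $S^{n,h_n}$ and $B^{h_n}$ to become asymptotically independent: $S^{n,h_n}\!\restriction_{[0,t]}$ depends on $B$ only through increments of $B$ over the random set $\{T^n_k: h_n\le k\le h_n+nt\}$, whose ``total variance'' is $t$ but which is located at Brownian times that, because $T^n_{h_n}\approx h_n/n\ge c$ and the window has length $\to0$ in the limit... wait, that is not quite zero; the correct contradiction is that $B^{h_n}_t$ has variance $t$ and is a fixed functional of $B$, while $S^{n,h_n}(t)$ converges in law to $\mathcal N(0,t)$ but its correlation with $B^{h_n}_t$ tends to $0$, so $\mathbb E[(S^{n,h_n}(t)-B^{h_n}_t)^2]\to 2t\ne 0$, contradicting \eqref{em}.

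The cleanest way to run (ii) rigorously, and the step I expect to be the main obstacle, is to prove that the correlation $\mathbb E[S^{n,h_n}(t)\,B^{h_n}_t]\to 0$ when $h_n\ge cn$. For this I would use the representation of $\mathcal T^{h}(S)$ in terms of the ``lost information at step $k$'' described in the proof of Theorem \ref{g}(i): writing $S^n$ in its own natural filtration $(\mathcal G^n_k)_k$, the value $\mathcal T^{h_n}(S^n)_{\lfloor nt\rfloor}$ is $\mathcal G^n_\infty$-measurable but orthogonal (in $L^2$) to $\mathcal G^n_{h_n}\supseteq\sigma(B_s, s\le c/2)$ for large $n$; meanwhile $B^{h_n}_t$ decomposes, via the Lévy-transform martingale representation, with a nonvanishing $\mathcal G^n_{h_n}$-measurable component — here one invokes that iterating the continuous Lévy transform finitely does not asymptotically decorrelate from $B$, but the \emph{discrete} iterate $h_n\to\infty$ does, giving the mismatch. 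A slightly different and perhaps safer route: argue by contradiction with (i) — if \eqref{em} held for some $t>0$ along a sequence with $h_n\ge cn$, combine it with the analogue of Corollary \ref{yeux} to produce, for two spacings $h_n$ and $h_n'=h_n+\lfloor\varepsilon n\rfloor$ both comparable to $n$, two limits $B^{h_n}$, $B^{h_n'}$ that would have to be simultaneously well-approximated by $S^{n,h_n}$, $S^{n,h_n'}$ which are asymptotically independent by Theorem \ref{g}(i) (their defining increments of $S^n$ sit in disjoint index-blocks $[h_n,h_n+nt]$ and $[h_n',h_n'+nt]$ once $\varepsilon>t$), whereas $B^{h_n}$ and $B^{h_n'}$ are not asymptotically independent (finitely many Lévy iterations preserve $L^2$-correlation of the time-$t$ marginal). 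That contradiction pins down (ii). I would present (i) in full detail and (ii) via this last independence-versus-correlation argument, relegating the quantitative correlation bound for the continuous Lévy iterates to a short lemma.
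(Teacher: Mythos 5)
Your part (i) is sound and in fact takes a slightly different, somewhat more economical route than the paper. The paper extracts $\alpha^0$ from a diagonal lemma (Lemma \ref{hmoum}) applied to $u_{k,n}=\mathbb E[d_U(S^{n,k},B^k)\wedge 1]$, and then must \emph{re-prove} that $S^{n,\alpha^0_n+j}-B^{\alpha^0_n+j}\to 0$ for each fixed $j$ (via a second tightness/identification argument for the shifted vector $V^n$) before extracting $\beta^0$ and setting $\alpha^1=\alpha^0+\beta^0$, and so on. Your block construction amounts to applying the diagonal lemma once to $v_{m,n}=\max_{k\le m}u_{k,n}$, producing one nondecreasing $m_n\to\infty$ with $\max_{k\le m_n}u_{k,n}\to 0$, after which \emph{any} choices $\alpha^i_n\le m_n$ work; this bypasses the shifted-index argument entirely. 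One slip: if $\alpha^i_n$ is merely ``bounded by $i+m$'' the gaps $\alpha^i_n-\alpha^{i-1}_n$ need not tend to infinity (they could be identically $1$); take instead something like $\alpha^i_n=(i+1)\lfloor\sqrt{m_n}\rfloor$ truncated at $m_n$, which is nondecreasing in $n$ and forces both $\alpha^i_n\to\infty$ and $\alpha^i_n-\alpha^{i-1}_n\to\infty$. That is cosmetic.

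Part (ii) has a genuine gap, and it sits exactly where you flagged it: both of your routes rest on an asymptotic (de)correlation claim that is neither proved nor provable by the tools at hand. In the first route you need $\mathbb E[S^{n,h_n}(t)\,B^{h_n}_t]\to 0$; the orthogonality of $S^{n,h_n}(t)$ to $\mathcal F_{T^n_{h_n}}$ only kills the part of $B^{h_n}_t$ that is $\mathcal F_{T^n_{h_n}}$-measurable, namely $B^{h_n}_{t\wedge T^n_{h_n}}$, and says nothing about the rest. In the second route you assert that $B^{h_n}$ and $B^{h_n+\lfloor\varepsilon n\rfloor}$ are \emph{not} asymptotically independent, justifying this by ``finitely many L\'evy iterations''; but the spacing $\lfloor\varepsilon n\rfloor\to\infty$, and since $T$ is measure preserving $(B^{h_n},B^{h_n+\lfloor\varepsilon n\rfloor})$ has the law of $(B,B^{\lfloor\varepsilon n\rfloor})$ --- whether this pair becomes asymptotically independent is precisely the open mixing/ergodicity question for $T$ discussed in Section \ref{ra}, which the paper explicitly says cannot be settled by asymptotics of $\mathcal T^n$; moreover (\ref{em}) is only assumed for the single sequence $(h_n)$, not for $(h'_n)$. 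The paper's actual argument avoids correlations altogether: from $\mathcal T^k(S^n)_i=\sum_{j}P^{n,k,j}(S^n_{j+k+1}-S^n_{j+k})$ with $P^{n,k,j}$ a product of $\mathcal F_{T^n_{j+k}}$-measurable signs, one gets the \emph{exact} identity $\mathbb E[S^{n,h_n}(t)\mid\mathcal F_{T^n_{h_n}}]=0$; if (\ref{em}) held, a Burkholder bound upgrades it to $L^2$, conditioning gives $B^{h_n}_{t\wedge T^n_{h_n}}=\mathbb E[B^{h_n}_t\mid\mathcal F_{T^n_{h_n}}]\to 0$ in $L^2$, hence $\mathbb E[t\wedge T^n_{h_n}]\to 0$, hence $T^n_{h_n}\to 0$ in probability, and the uniform approximation $T^n_k\approx k/n$ forces $h_n/n\to 0$. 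You would need to replace your correlation claims with an optional-stopping argument of this type.
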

 
In the next section, we review the Cs\'aki-Vincze transformation, establish part (i) of Theorem \ref{g} and show that $(S,\mathcal T(S),\cdots,\mathcal T^h(S),\cdots)$ ''converges'' in law to $(B,T(B),\cdots,T^h(B),\cdots)$. To prove part (ii) of Theorem \ref{g}, we use the simple idea : if $Z_n$ converges in law to a constant $c$, then the convergence holds also in probability. The other proofs are based on the crucial property of the transformation $\mathcal T$ : $\mathcal T^h(S)$ is independent of $\sigma(S_j, j\leq h)$ for each $h$. In Section \ref{ra}, we compare our work with \cite{MR1231991} and \cite{MR2417970} and discuss the famous question of ergodicity of $T$.
\section{Proofs.}
\subsection{The Cs\'aki-Vincze transformation and convergence in law.}\label{hop}
For the sequel, we recommand the lecture of the pages 109 and 110 in \cite{MR2168855} (Theorem \ref{khali} below). Some consequences (see Proposition \ref{wop} below) have been drawn in \cite{MR50101010} (Sections 2.1 and 2.2). We also notice that our stating of this result is slightly different from \cite{MR2168855} and leave to the reader to make the obvious analogy.
\begin{thm}\label{khali}(\cite{MR2168855} page 109)
Let $S = (S_n)_{n\geq 0}$ be a SRW defined on $(\Omega,\mathcal A,\mathbb P)$ and $X_i=S_i-S_{i-1}, i\geq1$. Define $\tau_{0}=0$ and for $l\geq 0$,
$$\tau_{l+1}=\min\big{\{i>\tau_{l} : S_{i-1}S_{i+1}< 0\big\}}.$$
Set $$\overline{X}_j=\sum_{l\geq0}(-1)^{l}X_1X_{j+1}1_{\{\tau_{l}+1\leq j\leq\tau_{l+1}\}}.$$
Then $\overline{S}_0=0,\ \overline{S}_n = \overline X_1+\cdots+\overline X_n, n\geq 1$ is a SRW. Moreover if $Y_n:=\overline S_n-\displaystyle{\min_{k\leq n}} \overline S_k$, then for all $n\in\N$, 
\begin{equation}\label{as}
|Y_n -|S_n|| \leq 2.
\end{equation}
We call $\overline S=\mathcal T(S)$, the Cs\'aki-Vincze transformation of $S$ (see the figures $1$ and $2$ below).
\end{thm}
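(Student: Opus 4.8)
The plan is to prove separately the two assertions of the theorem: that $\overline S$ is a SRW, and that $|Y_n-|S_n||\le2$ for all $n$. Throughout, put $\mathcal F_j=\sigma(X_1,\dots,X_j)$. Since $S_{\tau_l}=0$ forces $\tau_{l+1}\ge\tau_l+2$, the intervals $\{\tau_l+1,\dots,\tau_{l+1}\}$, $l\ge0$, are nonempty and partition $\{1,2,\dots\}$, so I write $l(j)$ for the unique index with $\tau_{l(j)}<j\le\tau_{l(j)+1}$; then $\overline X_j=(-1)^{l(j)}X_1X_{j+1}$. I would first record two elementary pathwise facts. A SRW has no two consecutive zeros and $S_i=0$ forces $S_{i\pm1}\in\{-1,1\}$; hence $S$ does not change sign inside a block, i.e. for each $l$ either $S_i\ge0$ for all $\tau_l<i<\tau_{l+1}$ or $S_i\le0$ for all such $i$, since a sign change would exhibit an $i$ with $S_{i-1}=1$, $S_i=0$, $S_{i+1}=-1$ (or the mirror image) — a crossing strictly inside the block, contradicting the minimality defining $\tau_{l+1}$. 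From this and induction on $l$ I get the alternation lemma $\mathrm{sgn}(S_{\tau_l+1})=(-1)^lX_1$ for all $l\ge0$: the base case is $S_1=X_1$, and for the step, $S_{\tau_{l+1}-1}\ne0$ (it is part of the crossing at $\tau_{l+1}$) lies strictly inside block $l$, so it has the block sign $(-1)^lX_1$, which the crossing at $\tau_{l+1}$ then reverses.

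For the first assertion, the key observation is that $l(j)$ is exactly the number of crossing times in $\{1,\dots,j-1\}$, and whether $i\le j-1$ is a crossing time is read off from $S_{i-1},S_{i+1}$ with $i+1\le j$; thus $l(j)$, and hence $c_j:=(-1)^{l(j)}X_1\in\{-1,1\}$, is $\mathcal F_j$-measurable. Since $\overline X_j=c_jX_{j+1}$ with $X_{j+1}$ independent of $\mathcal F_j$ and uniform on $\{-1,1\}$, we obtain $\mathbb E[g(\overline X_j)\mid\mathcal F_j]=\tfrac12\big(g(1)+g(-1)\big)$ for every bounded $g$, so $\overline X_j$ is uniform on $\{-1,1\}$ and independent of $\mathcal F_j$; as $\overline X_1,\dots,\overline X_{j-1}$ are $\mathcal F_j$-measurable, an induction on $j$ yields that $(\overline X_j)_{j\ge1}$ is i.i.d. uniform on $\{-1,1\}$, i.e. $\overline S$ is a SRW. (Equivalently, for each fixed horizon the map $(X_1,\dots,X_{m+1})\mapsto(\overline X_1,\dots,\overline X_m)$ is two-to-one and hence measure preserving.)

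For the pathwise inequality I would use the alternation lemma to evaluate $\overline X_j$ according to whether $j$ is interior to its block ($\tau_{l(j)}<j<\tau_{l(j)+1}$) or is a crossing time ($j=\tau_{l(j)+1}$): in the interior case $\overline X_j=|S_{j+1}|-|S_j|$ (treating separately $S_j\ne0$ and $S_j=0$), while at a crossing time $\overline X_j=-1$ whereas $|S_{j+1}|-|S_j|=1$. Summing and using $|S_1|=1$ gives the identity $\overline S_n=|S_{n+1}|-1-2N_n$, where $N_n=\#\{l\ge1:\tau_l\le n\}$. I would then show that $m_n:=\min_{0\le k\le n}\overline S_k$ satisfies $m_n\in\{-2N_n-1,-2N_n\}$: the inequality $m_n\ge-2N_n-1$ is immediate from $|S_{k+1}|\ge0$ and $N_k\le N_n$, and $m_n\le-2N_n$ follows by taking $k=\tau_{N_n}$ (or $k=0$ if $N_n=0$), where $\overline S_{\tau_{N_n}}=-2N_n$ since $|S_{\tau_{N_n}+1}|=1$. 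Hence $Y_n=\overline S_n-m_n=|S_{n+1}|-1+\delta_n$ with $\delta_n\in\{0,1\}$, and since $|S_{n+1}|-|S_n|\in\{-1,1\}$ for $n\ge1$ (and $Y_0=|S_0|=0$), this gives $Y_n-|S_n|\in\{-2,-1,0,1\}$, in particular $|Y_n-|S_n||\le2$.

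The conceptual input is modest: one small combinatorial lemma (a sign change of $S$ inside a block forces a crossing, using that a SRW has no two consecutive zeros), one measurability remark (the sign factor $c_j$ at step $j$ depends only on $X_1,\dots,X_j$, which makes the conditioning argument one line), and one telescoping identity. I expect the main obstacle to be the bookkeeping in the third step: correctly isolating the "defective" indices and the exact size of each defect, and handling with care the systematic shift $j\mapsto j+1$ in the definition of $\overline X_j$ together with the edge regimes $n<\tau_1$ and $n=\tau_l$, so that the constant $2$ comes out precisely.
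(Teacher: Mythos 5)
The paper offers no proof of this statement: Theorem \ref{khali} is quoted from R\'ev\'esz \cite{MR2168855} (pages 109--110), so there is nothing internal to compare against, and your argument has to stand on its own. It does. Your first part is the standard and correct route: since $\tau_l\ge \tau_{l-1}+2$ and whether $i$ is a crossing time is determined by $S_{i-1},S_{i+1}$, the sign $c_j=(-1)^{l(j)}X_1$ is indeed $\mathcal F_j$-measurable, $\overline X_j=c_jX_{j+1}$ is uniform on $\{-1,1\}$ and independent of $\mathcal F_j\supset\sigma(\overline X_1,\dots,\overline X_{j-1})$, and the induction closes. (One point you pass over silently: the blocks partition $\{1,2,\dots\}$ a.s.\ because the SRW has infinitely many crossings; and even on the null set where some $\tau_{l+1}=\infty$ the formula $\overline X_j=(-1)^{l}X_1X_{j+1}$ still makes sense, so nothing breaks.) Your second part is also correct and is where the real content lies: the no-sign-change-inside-a-block lemma and the alternation $\mathrm{sgn}(S_{\tau_l+1})=(-1)^lX_1$ give $\overline X_j=|S_{j+1}|-|S_j|$ off the crossing set and $\overline X_j=-1$ at crossings, whence the identity $\overline S_n=|S_{n+1}|-1-2N_n$; the evaluation $\overline S_{\tau_{N_n}}=-2N_n$ pins the running minimum into $\{-2N_n-1,-2N_n\}$, and $Y_n=|S_{n+1}|-1+\delta_n$ with $\delta_n\in\{0,1\}$ yields the sharper conclusion $-2\le Y_n-|S_n|\le 1$, which implies \eqref{as}. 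Your identity $\tau_l=\min\{n:\overline S_n=-2l\}$ for the minima is exactly the property the paper later invokes from \cite{MR2168855} in the proof of Proposition \ref{wop}, so your write-up would in fact make the paper more self-contained. The one case that deserves an explicit line rather than a parenthesis is $j$ interior with $S_j=0$: there you need $S_{j+1}\ne 0$ (no two consecutive zeros) and $j+1<\tau_{l+1}$ or $j+1$ carrying the block sign, which you have but should spell out.
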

\begin{figure}[h]
\begin{center}
\resizebox{12.5cm}{7cm}{\input{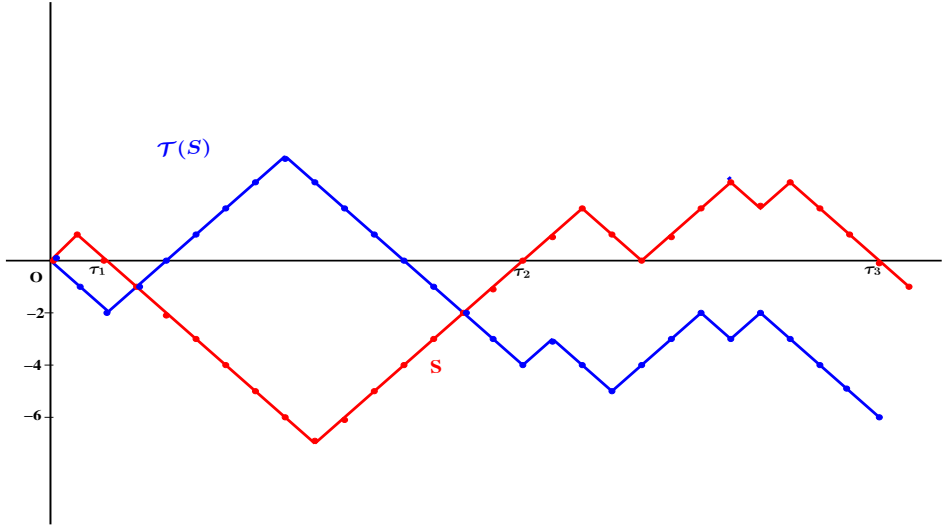}}
\caption{$S$ and $\mathcal T(S)$.}
\end{center}
\end{figure}
\begin{figure}[h]
\begin{center}
\resizebox{12.5cm}{5.3cm}{\input{fig2_article4.pstex_t}}
\caption{$|S|$ and $Y$.}
\end{center}
\end{figure}
\noindent Note that $(-1)^l X_1$ is simply equal to $\textrm{sgn}(S)_{|[\tau_l+1,\tau_{l+1}]}(:=X_{\tau_l+1})$ which can easily be checked by induction on $l$. Thus for all $j\in[\tau_l+1,\tau_{l+1}]$,
$$\overline{X}_j=\textrm{sgn}(S)_{|[\tau_l+1,\tau_{l+1}]}(S_{j+1}-S_j)$$
or equivalently 
\begin{equation}\label{monz}
\mathcal T(S)_j-\mathcal T(S)_{j-1}=\textrm{sgn}(S_{j-\frac{1}{2}})(S_{j+1}-S_j)
\end{equation}
where $t\longrightarrow S_t$ is the linear interpolation of $(S_n)_{n\geq 0}$. Hence, one can expect that $(S,\mathcal T(S)$ will ''converge'' to $(B,B^1)$ in a suitable sense. The following proposition has been established in \cite{MR50101010}. We give its proof for completeness.
\begin{prop}\label{wop}
With the same notations of Theorem \ref{khali}, we have 
\begin{enumerate}
 \item [(i)] For all $n\geq0$, $\sigma(\mathcal T(S)_j, j\leq n)\vee \sigma(S_1)=\sigma(S_j, j\leq n+1)$.
 \item [(ii)] $S_1$ is independent of $\sigma(\mathcal T(S))$.
\end{enumerate}
\end{prop}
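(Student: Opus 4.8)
The plan is to reduce everything to the pointwise recursion (\ref{monz}), which, writing $\overline X_j:=\mathcal T(S)_j-\mathcal T(S)_{j-1}$, reads
$$\overline X_j=\varepsilon_j\,X_{j+1},\qquad \varepsilon_j:=\textrm{sgn}(S_{j-1}+S_j)\in\{-1,1\}$$
(the sign is well defined because consecutive values of $S$ differ by $\pm1$, so $S_{j-1}+S_j\neq0$, with $S_0=0$). The structural point underlying both parts is that $\varepsilon_j$ is a deterministic function of $X_1,\dots,X_j$ alone — it does \emph{not} involve $X_{j+1}$ — and that $\varepsilon_1=\textrm{sgn}(S_1)=X_1$; since $\varepsilon_j^2=1$, the recursion may be inverted as $X_{j+1}=\varepsilon_j\,\overline X_j$.

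\textbf{Part (i).} The inclusion $\sigma(\mathcal T(S)_j,\,j\le n)\vee\sigma(S_1)\subseteq\sigma(S_j,\,j\le n+1)$ is immediate: by the recursion $\overline X_j$ is measurable with respect to $\sigma(X_1,\dots,X_{j+1})=\sigma(S_1,\dots,S_{j+1})$, hence $\mathcal T(S)_j\in\sigma(S_i,\,i\le j+1)$ for every $j$, and $S_1$ lies trivially in the right-hand side. For the converse, set $\mathcal G_n:=\sigma(\mathcal T(S)_j,\,j\le n)\vee\sigma(S_1)$ and prove by induction on $j\in\{1,\dots,n+1\}$ that $X_j$ is $\mathcal G_n$-measurable: the case $j=1$ is $X_1=S_1$; and if $X_1,\dots,X_j$ are $\mathcal G_n$-measurable then so is $\varepsilon_j$, while $\overline X_j=\mathcal T(S)_j-\mathcal T(S)_{j-1}$ is $\mathcal G_n$-measurable for $j\le n$, whence $X_{j+1}=\varepsilon_j\overline X_j$ is $\mathcal G_n$-measurable. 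Therefore $\sigma(S_j,\,j\le n+1)=\sigma(X_1,\dots,X_{n+1})\subseteq\mathcal G_n$, and equality follows.

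\textbf{Part (ii).} The reconstruction used in (i) shows that, for each $n$, the map $\Phi_n\colon(x_1,\dots,x_{n+1})\mapsto(x_1,\overline x_1,\dots,\overline x_n)$, where $\overline x_j:=\textrm{sgn}(s_{j-1}+s_j)\,x_{j+1}$ and $s_i:=x_1+\cdots+x_i$ (with $s_0:=0$), is a bijection of $\{-1,1\}^{n+1}$ onto itself (its inverse is exactly the induction above). Since $(X_1,\dots,X_{n+1})$ is uniform on $\{-1,1\}^{n+1}$, so is its image $(S_1,\overline X_1,\dots,\overline X_n)$, so these $n+1$ coordinates are independent; in particular $S_1$ is independent of $\sigma(\overline X_1,\dots,\overline X_n)=\sigma(\mathcal T(S)_j,\,j\le n)$. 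As $\bigcup_{n\ge0}\sigma(\mathcal T(S)_j,\,j\le n)$ is a $\pi$-system generating $\sigma(\mathcal T(S))$, independence of $S_1$ from every member of this family implies independence of $S_1$ from $\sigma(\mathcal T(S))$, which is (ii).

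The only genuinely delicate ingredient is the observation that $\varepsilon_j$ depends on $X_1,\dots,X_j$ but not on $X_{j+1}$: this is what makes the recursion invertible and $\Phi_n$ a bijection, and although it is obscured in the original description of $\mathcal T$ via the stopping times $\tau_l$, it is transparent from (\ref{monz}). The remaining work — the induction in (i) and the $\pi$-system argument in (ii) — is routine bookkeeping.
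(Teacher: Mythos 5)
Your proof is correct, and on both halves it takes a genuinely different route from the paper's. For the inclusion $\supset$ in (i), the paper does not reconstruct the sign $\varepsilon_j$ inductively from the already-recovered increments; instead it invokes the identity $\tau_l=\min\{m\geq 0:\mathcal T(S)_m=-2l\}$ (quoted from Cs\'aki--Vincze), so that each $\tau_l$ is a stopping time of the natural filtration of $\mathcal T(S)$, the event $\{\tau_l+1\leq j\leq\tau_{l+1}\}$ lies in $\sigma(\mathcal T(S)_h,\,h\leq j-1)$, and $X_{j+1}=\sum_{l}(-1)^lX_1\overline X_j1_{\{\tau_l+1\leq j\leq\tau_{l+1}\}}$ is then measurable with respect to $\sigma(\mathcal T(S)_h,\,h\leq j)\vee\sigma(S_1)$. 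Your forward induction, $X_{j+1}=\varepsilon_j\overline X_j$ with $\varepsilon_j=\mathrm{sgn}(S_{j-1}+S_j)$ a function of $X_1,\dots,X_j$ only, reaches the same conclusion without importing that identity, so the argument is self-contained once (\ref{monz}) is granted. For (ii), the paper rewrites the $\tau_l$ in terms of $X_1S_{i-1}$ and $X_1S_{i+1}$ to conclude that $\mathcal T(S)$ is $\sigma(X_1X_{j+1},\,j\geq1)$-measurable, and then relies on the (unstated but standard) fact that this $\sigma$-field is independent of $X_1$ for i.i.d.\ symmetric signs; your bijection argument instead shows the stronger statement that $(S_1,\overline X_1,\dots,\overline X_n)$ is uniform on $\{-1,1\}^{n+1}$, hence i.i.d., and you make explicit the $\pi$-system step that the paper leaves implicit. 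Both routes are sound; yours is the more elementary and arguably more informative (it exhibits the joint law exactly), while the paper's exploits the structural description of $\mathcal T(S)$ through the stopping times $\tau_l$, which is the form of the transformation emphasized in Theorem \ref{khali}.
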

\begin{proof}
(i) The inclusion $\subset$ is clear from (\ref{monz}). Now, for all $1\leq j\leq n$, we have $X_{j+1}=\displaystyle\sum_{l\geq0}(-1)^{l}X_1\overline X_j1_{\{\tau_{l}+1\leq j\leq\tau_{l+1}\}}$. As a consequence of $(iii)$ and $(iv)$ \cite{MR2168855} (page 110), for all $l\geq 0$,
$$\tau_{l}=\min{\{n\geq 0 , \mathcal T(S)_n=-2l\}}. $$
Thus $\tau_l$ is a stopping time with respect to the natural filtration of $\mathcal T(S)$ and as a result $\{\tau_{l}+1\leq j\leq\tau_{l+1}\}\in\sigma(\mathcal T(S)_h, h\leq j-1)$ which proves the inclusion $\supset$.\\
(ii) We may write for all $l\geq 1$,
$$\tau_{l}=\min{\{i>\tau_{l-1} : X_1S_{i-1}X_1S_{i+1}< 0\}}.$$
This shows that $\mathcal T(S)$ is $\sigma(X_1X_{j+1}, j\geq0)$-measurable and (ii) is proved.\\
\end{proof}
\noindent Note that
$$\mathcal T(S)=\mathcal T(-S),\ \ \sigma(\mathcal T^{h+1}(S))\subset\sigma(\mathcal T^h(S)),$$
which is the analogous of 
$$T(B)=T(-B),\ \ \sigma(T^{h+1}(B))\subset\sigma(T^h(B)).$$
The previous proposition yields the following
\begin{corl}\label{ko}
For all $n\geq 0$,
\begin{enumerate}
 \item [(i)] $\sigma(S)=\sigma(\mathcal T^n(S))\vee \sigma(S_k, k\leq n)$.
 \item [(ii)] $\sigma(\mathcal T^n(S))$ and $\sigma(S_k, k\leq n)$ are independent.
 \item [(iii)] The $\sigma$-field
 $$\mathcal G^{\infty}=\bigcap_{n\geq 0} \sigma(\mathcal T^n(S))$$
 is $\mathbb P$-trivial.
\end{enumerate}
\end{corl}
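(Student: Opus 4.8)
The plan is to deduce all three statements of Corollary \ref{ko} from Proposition \ref{wop}, the engine being a single combinatorial identity that rewrites an initial segment of the path of $S$ in terms of the ``first steps'' $\mathcal T^k(S)_1$. \emph{Step 1 (a decomposition lemma).} First I would prove, by induction on $n\geq 1$ and for every SRW $S$, that
\[
\sigma(S_k,\,k\leq n)=\sigma(S_1)\vee\sigma(\mathcal T(S)_1)\vee\cdots\vee\sigma(\mathcal T^{\,n-1}(S)_1).
\]
The case $n=1$ is immediate. For the inductive step, Proposition \ref{wop}(i) applied to $S$ with index $n$ gives $\sigma(S_k,k\leq n+1)=\sigma(S_1)\vee\sigma(\mathcal T(S)_j,\,j\leq n)$, and applying the induction hypothesis to the SRW $\mathcal T(S)$ rewrites $\sigma(\mathcal T(S)_j,\,j\leq n)=\sigma(\mathcal T(S)_1)\vee\cdots\vee\sigma(\mathcal T^{\,n}(S)_1)$ (using $\mathcal T^{j}(\mathcal T(S))=\mathcal T^{j+1}(S)$); combining the two closes the induction. (The inclusion $\subseteq$ also follows on its own from iterating (\ref{monz}), which exhibits $\mathcal T^k(S)_1$ as a function of $S_1,\dots,S_{k+1}$.) Letting $n\to\infty$ gives $\sigma(S)=\bigvee_{k\geq 0}\sigma(\mathcal T^k(S)_1)$; applied to $\mathcal T^n(S)$ in place of $S$ this yields $\sigma(\mathcal T^n(S))=\bigvee_{k\geq n}\sigma(\mathcal T^k(S)_1)$, and in particular $\sigma(\mathcal T^n(S))=\sigma(\mathcal T^{n+1}(S))\vee\sigma(\mathcal T^n(S)_1)$, where these last two $\sigma$-fields are independent by Proposition \ref{wop}(ii) applied to $\mathcal T^n(S)$.

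\emph{Step 2 (parts (i) and (ii)).} Part (i) is now immediate: by Step 1, $\sigma(S_k,k\leq n)=\bigvee_{k=0}^{n-1}\sigma(\mathcal T^k(S)_1)$ and $\sigma(\mathcal T^n(S))=\bigvee_{k\geq n}\sigma(\mathcal T^k(S)_1)$, whose join is $\bigvee_{k\geq 0}\sigma(\mathcal T^k(S)_1)=\sigma(S)$ (for $n=0$ one uses $S_0=0$). For part (ii) I would induct on $n$, the case $n=0$ being trivial since $\sigma(S_k,k\leq0)$ is $\mathbb P$-trivial. Assuming $\sigma(\mathcal T^n(S))$ is independent of $\sigma(S_k,k\leq n)$, note from Step 1 that $\sigma(S_k,k\leq n+1)=\sigma(S_k,k\leq n)\vee\sigma(\mathcal T^n(S)_1)$, so the sets $B\cap C$ with $B\in\sigma(S_k,k\leq n)$ and $C\in\sigma(\mathcal T^n(S)_1)$ form a $\pi$-system generating it. Fixing $A\in\sigma(\mathcal T^{n+1}(S))$ and such $B,C$: since $A$ and $C$ both lie in $\sigma(\mathcal T^n(S))$ with $A$ independent of $C$, and $A\cap C\in\sigma(\mathcal T^n(S))$ is independent of $B$, we get $\mathbb P(A\cap B\cap C)=\mathbb P(A\cap C)\,\mathbb P(B)=\mathbb P(A)\,\mathbb P(C)\,\mathbb P(B)$; moreover $\mathbb P(B\cap C)=\mathbb P(B)\,\mathbb P(C)$ because $C\in\sigma(\mathcal T^n(S))$ is independent of $B$. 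Hence $\mathbb P(A\cap(B\cap C))=\mathbb P(A)\,\mathbb P(B\cap C)$, and the $\pi$-$\lambda$ theorem (fixing $A$ and varying over the generating $\pi$-system) upgrades this to $\sigma(\mathcal T^{n+1}(S))\perp\sigma(S_k,k\leq n+1)$.

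\emph{Step 3 (part (iii), and the main difficulty).} Since $\mathcal G^{\infty}\subseteq\sigma(\mathcal T^n(S))$ for every $n$, part (ii) gives $\mathcal G^{\infty}\perp\sigma(S_k,k\leq n)$ for every $n$; as $\bigcup_n\sigma(S_k,k\leq n)$ is a $\pi$-system generating $\sigma(S)$, it follows that $\mathcal G^{\infty}\perp\sigma(S)$. But $\mathcal G^{\infty}\subseteq\sigma(\mathcal T^0(S))=\sigma(S)$, so $\mathcal G^{\infty}$ is independent of itself and hence $\mathbb P$-trivial. The only genuinely delicate point in the whole argument is the bookkeeping in Step 2: one has only \emph{pairwise} independences among $\sigma(\mathcal T^{n+1}(S))$, $\sigma(\mathcal T^n(S)_1)$ and $\sigma(S_k,k\leq n)$, and must extract from them the single \emph{joint} independence required — which is exactly what the $\pi$-system computation accomplishes. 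Everything else is a direct consequence of Proposition \ref{wop} together with standard Dynkin-class arguments.
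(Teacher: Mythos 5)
Your proposal is correct and takes essentially the same route as the paper: your Step 1 identity $\sigma(S_k,k\leq n)=\sigma(S_1)\vee\sigma(\mathcal T(S)_1)\vee\cdots\vee\sigma(\mathcal T^{\,n-1}(S)_1)$ is exactly the paper's equation (\ref{pl}), obtained likewise by iterating Proposition \ref{wop}(i), and part (iii) is the identical self-independence argument. For part (ii) the paper handles the same pairwise-versus-joint independence issue by grouping $(\mathcal T^n(S)_1,\mathcal T^{n+1}(S))$ as a $\sigma(\mathcal T^n(S))$-measurable pair and invoking Proposition \ref{wop}(ii), which is the same computation your $\pi$--$\lambda$ argument makes explicit.
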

\begin{proof}
Set $X_i=S_i-S_{i-1}, i\geq 1$.\\
(i) We apply successively Proposition \ref{wop} (i) so that for all $n\geq 1$,
\begin{eqnarray}
\sigma(S)&=&\sigma(\mathcal T(S))\vee \sigma(S_1)\nonumber\\
&=&\sigma(\mathcal T^2(S))\vee\sigma(\mathcal T(S)_1)\vee \sigma(S_1)\nonumber\\
&=&\cdots\nonumber\\
&=&\sigma(\mathcal T^{n}(S))\vee \sigma(\mathcal T^{n-1}(S)_1)\vee\cdots\vee\sigma(\mathcal T(S)_1)\vee \sigma(S_1).\nonumber\
\end{eqnarray} 
To deduce (i), it suffices to prove that
\begin{equation}\label{pl}
\sigma(S_k, k\leq n)=\sigma(\mathcal T^{n-1}(S)_1)\vee\cdots\vee\sigma(\mathcal T(S)_1)\vee \sigma(S_1).
\end{equation}
Again Proposition \ref{wop} (i), yields
\begin{eqnarray}
\sigma(S_k, k\leq n)&=&\sigma(\mathcal T(S)_j, j\leq n-1)\vee \sigma(S_1)\nonumber\\
&=&\sigma(\mathcal T^2(S)_j, j\leq n-2)\vee\sigma(\mathcal T(S)_1)\vee \sigma(S_1)\nonumber\\
&=&\cdots\nonumber\\
&=&\sigma(\mathcal T^{n-1}(S)_1)\vee \sigma(\mathcal T^{n-2}(S)_1)\cdots\vee\sigma(\mathcal T(S)_1)\vee \sigma(S_1)\nonumber\
\end{eqnarray} 
which proves (\ref{pl}) and allows to deduce (i).\\
(ii) will be proved by induction on $n$. For $n=0$, this is clear. Suppose the result holds for $n$, then $S_1,\mathcal T^1(S)_1,\cdots, \mathcal T^{n-1}(S)_1, \mathcal T^n(S)$ are independent (recall (\ref{pl})). Let prove that $S_1,\mathcal T^1(S)_1,\cdots, \mathcal T^n(S)_1,\mathcal T^{n+1}(S)$ are independent which will imply (ii) by (\ref{pl}). Note that $\mathcal T^n(S)_1$ and $\mathcal T^{n+1}(S)$ are $\sigma(\mathcal T^n(S))$-measurable. By the induction hypothesis, this shows  that $(S_1, \mathcal T^1(S)_1,\cdots,\mathcal T^{n-1}(S)_1)$ and $(\mathcal T^n(S)_1, \mathcal T^{n+1}(S))$ are independent. But $\mathcal T^n(S)_1$ and $\mathcal T^{n+1}(S)$ are also independent by Proposition \ref{wop} (ii). Hence (ii) holds for $n+1$ and thus for all $n$.\\
(iii) Let $A\in\mathcal G^{\infty}$ and fix $n\geq 1$. Then $A\in\sigma(\mathcal T^n(S))$ and we deduce from (ii) that $A$ is independent of $\sigma(S_k, k\leq n)$. Since this holds for all $n$, $A$ is independent of $\sigma(S)$. As $\mathcal G^{\infty}\subset\sigma(S)$, $A$ is therefore independent of itself.
\end{proof}
Let $S$ be a SRW defined on $(\Omega,\mathcal A,\mathbb P)$ and recall the definition of $S^{h}_n(t)$ from (\ref{moto}). On $\mathbb E$, define 
$$Z^n(t_0,t_1,\cdots,t_h,\cdots)=\left(S_n^0(t_0), S_n^1(t_1), \cdots, S_n^h(t_h), \cdots\right)$$
and let $\mathbb P_n$ be the law of $Z^n$.
\begin{lemma}\label{gh}
The family $\{\mathbb P_n, n\geq 1\}$ is tight on $\mathbb E$.
\end{lemma}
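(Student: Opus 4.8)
\noindent\emph{Proof sketch.} The plan is to exploit the product structure of $\mathbb{E}=\mathbb{W}^{\mathbb{N}}$. Since $\mathbb{W}$ (equipped with $d_U$) is a Polish space and a countable product of Polish spaces is Polish, I would first reduce the claim to a one-coordinate statement: a family of probability measures on $\mathbb{E}$ is tight as soon as each of its one-dimensional coordinate marginals is tight on $\mathbb{W}$. Concretely, given $\varepsilon>0$, if for every $h\geq 0$ there is a compact $K_h\subset\mathbb{W}$ with $\mathbb{P}(S^h_n\in K_h)\geq 1-\varepsilon 2^{-h-1}$ for all $n\geq 1$, then $K=\prod_{h\geq 0}K_h$ is compact in $\mathbb{E}$ by Tychonoff's theorem, and $\mathbb{P}_n(\mathbb{E}\setminus K)\leq\sum_{h\geq 0}\mathbb{P}(S^h_n\notin K_h)\leq\sum_{h\geq 0}\varepsilon 2^{-h-1}=\varepsilon$ uniformly in $n$. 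So it suffices to show that for each fixed $h$ the family $\{\mathcal{L}(S^h_n):n\geq 1\}$ is tight on $\mathbb{W}$.

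The key observation, which makes this step easy, is that $\mathcal{T}^h(S)$ is again a SRW: this follows by applying Theorem \ref{khali} $h$ times, since $\mathcal{T}$ maps SRW to SRW. Hence, for every $h$ and every $n$, the path $t\mapsto S^h_n(t)$ in (\ref{moto}) is simply the Donsker-type polygonal interpolation of a simple random walk, rescaled by $1/\sqrt{n}$ in space and $1/n$ in time; in particular $S^h_n$ has the same law on $\mathbb{W}$ as $S^0_n$, so the marginal does not depend on $h$ at all. We are thus reduced to checking tightness of the single family $\{\mathcal{L}(S^0_n):n\geq 1\}$.

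Finally, this is the classical Donsker invariance principle on $\mathbb{C}(\R_+,\R)$: the laws $\mathcal{L}(S^0_n)$ converge weakly to Wiener measure, hence the family is relatively compact and, by Prokhorov's theorem, tight. (Alternatively, one obtains tightness interval by interval on each $[0,N]$ from the standard fourth-moment increment estimate for rescaled simple random walk, the elementary bound on the polygonal interpolation over the mesh intervals of length $1/n$, and the Arzel\`a--Ascoli / Kolmogorov--Chentsov criterion.) Combining this with the product construction of $K$ above completes the proof. I do not expect any genuine obstacle here: the only input beyond classical facts is the remark that every iterate $\mathcal{T}^h(S)$ is again a simple random walk, which is immediate from Theorem \ref{khali}.
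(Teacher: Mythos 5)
Your proof is correct and follows essentially the same route as the paper: reduce to coordinate-wise tightness on $\mathbb W$ and invoke Donsker's theorem for each $S^h_n$ (each $\mathcal T^h(S)$ being again a SRW). The only difference is that you prove the reduction explicitly via Tychonoff and a $\varepsilon 2^{-h-1}$ summation, where the paper simply cites a reference for the fact that tightness of each coordinate marginal suffices on the countable product.
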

\begin{proof} By Donsker theorem for each $h$, $S_n^h$ converges in law to standard Brownian motion as $n\rightarrow\infty$. Thus the law of each coordinate of $Z^n$ is tight on $\mathbb W$ which is sufficient to get the result (see \cite{MR838085} page 107). 
\end{proof}
\noindent\textbf{The limit process.} Fix a sequence $(m_n,n\in\N)$ such that $Z^{m_n}\xrightarrow[\text{$n\rightarrow +\infty $}]{\text{law}}Z$ in $\mathbb E$ where 
$$Z=\bigg(B^{(0)},B^{(1)},\cdots,B^{(h)},\cdots\bigg)$$
is the limit process. Note that $B^{(0)}$ is a Brownian motion. From (\ref{as}), we have $\forall n\geq 1, t\geq 0$
$$\left||S^0_n(t)|-(S_n^1(t)-\displaystyle{\min_{0\leq u\leq t}} S_n^1(u))\right|\leq \frac{2012}{\sqrt{n}}.$$
Letting $n\rightarrow\infty$, we get
$$|B^{(0)}_t|=B_t^{(1)}-\min_{0\leq u\leq t}B_u^{(1)}.$$
Tanaka's formula for local time gives
$$|B^{(0)}_t|=\int_{0}^{t}sgn(B^{(0)}_u)dB^{(0)}_u+L_t(B^{(0)})=B_t^{(1)}-\displaystyle{\min_{0\leq u \leq t}} B_u^{(1)},$$
where $L_t(B^{(0)})$ is the local time at $0$ of $B^{(0)}$ and so
$$B_t^{(1)}=\int_{0}^{t}\textrm{sgn}(B^{(0)}_s)dB^{(0)}_s.$$ 
The same reasoning shows that for all $h\geq 1$,
$$B_t^{(h+1)}=\int_{0}^{t}\textrm{sgn}(B^{(h)}_s)dB^{(h)}_s.$$ 
Thus the law of $Z$ is independent of the sequence $(m_n,n\in\N)$ and therefore
\begin{equation}\label{kil}
\left(S_n^0,S_n^1,\cdots,S_n^h,\cdots\right)\xrightarrow[\text{$n\rightarrow +\infty $}]{\text{law}}\bigg(B,B^1,\cdots,B^h,\cdots\bigg)\ \ \textrm{in}\ \mathbb E
\end{equation}
where $B$ is a Brownian motion.
\subsection{Convergence in probability.}\label{tawn}
Let $B$ a Brownian motion and recall the notations in Theorem \ref{g}. For each $n\geq 1$, define
$$U^n=\bigg(B,S^{n,0},B^1,S^{n,1},\cdots,B^h,S^{n,h},\cdots\bigg).$$
and let $\mathbb Q_n$ be the law of $U^n$. Since $\mathcal T^h(S^n)$ is a simple random walk for each $(h,n)$, a similar argument as in the proof of Lemma \ref{gh} shows that $\{\mathbb Q_n, n\geq 1\}$ is tight on $\mathbb E$. Fix a sequence $(m_n,n\in\N)$ such that $U^{m_n}\xrightarrow[\text{$n\rightarrow +\infty $}]{\text{law}}U$ in $\mathbb E$. Using (\ref{kil}), we see that there exist two Brownian motions $X$ and $Y$ such that
$$U=\bigg(X,Y,X^1,Y^1,\cdots,X^h,Y^h,\cdots\bigg).$$
It is easy to check that if $\varphi : \mathbb W\longrightarrow\R$ is bounded and uniformly continuous, then $\varPsi(f,g)=\varphi(f-g)$ defined for all $(f,g)\in \mathbb W^2$ is also bounded uniformly continuous which comes from 
$$d_U(f-f',g-g')=d_U(f-g,f'-g')\ \textrm{for all}\ f,f',g,g'\in\mathbb W.$$ 
Thus if $(F_n,G_n)$ converges in law to $(F,G)$ in $\mathbb W^2$, then $F_n-G_n$ converges in law to $F-G$ in $\mathbb W$. Applying this, we see that $B-S^{n,0}$ converges in law to $X-Y$. On the other hand, $B-S^{n,0}$ converges to $0$ (in $\mathbb W$) in probability (see \cite{MR0345224} page 39). Consequently $X=Y$ and 
$$U^n\xrightarrow[\text{$n\rightarrow +\infty $}]{\text{law}}\left(B,B,B^1,B^1,\cdots,B^h,B^h,\cdots\right)\ \ \textrm{in}\ \mathbb E.$$
In particular for each $h$, $S^{n,h}-B^h$ converges in law to $0$ as $n\rightarrow\infty$, that is $S^{n,h}$ converges to $B^h$ in probability as $n\rightarrow\infty$. 
Now the following equivalences are classical
\begin{itemize}
\item [(i)] $\lim_{n\rightarrow\infty}U^n=(B,B^1,\cdots)$ in probability in $\mathbb E$.
\item[(ii)] $\lim_{n\rightarrow\infty} E[d(U^n,U)\wedge 1]=0$.
\item[(iii)] For each $h$, $\lim_{n\rightarrow\infty} E[d_U(S^{n,h},B^h)\wedge 1]=0$.
\item[(iv)] For each $h$, $\lim_{n\rightarrow\infty}S^{n,h}=B^h$ in probability in $\mathbb W$.
\end{itemize}
Since we have proved (iv), Theorem \ref{g} holds.
\subsection{Proof of Corollary \ref{yeux}.}
(i) Let $S$ be a SRW and $X_0,X_1,\cdots,X_{p}$ be $p+1$ independent Brownian motions (not necessarily defined on the same probability space as $S$). Fix $$0\leq t^0_1\leq\cdots\leq t^0_{i_0},\ \  0\leq t^1_1\leq\cdots\leq t^1_{i_1},\cdots,0\leq t^p_1\leq\cdots\leq t^p_{i_p}.$$
By Corollary \ref{ko} (ii), for $n$ large enough (such that ${\lfloor nt^0_{i_0}\rfloor}+1\leq {\lfloor n\alpha^1_n\rfloor}$),  $\big(S^0_n(t^0_1),\cdots,S^0_n(t^0_{i_0})\big)$ which is $\sigma(S_j, j\leq {\lfloor nt^0_{i_0}}\rfloor+1)$-measurable, is independent of $\mathcal T^{{\lfloor n\alpha^1_n\rfloor}}(S)$. Thus
$(S^0_n(t^0_1),\cdots,S^0_n(t^0_{i_0}))$ is independent of $(S^{\lfloor n\alpha^1_n\rfloor}_n,\cdots,S^{\lfloor n\alpha^p_n\rfloor}_n)$ and similarly $\mathcal T^{{\lfloor n\alpha^2_n\rfloor}}(S)$ is independent of $\sigma(\mathcal T^{{\lfloor n\alpha^1_n\rfloor}}(S)_j, j\leq {\lfloor n\alpha^2_n\rfloor}-{\lfloor n\alpha^1_n\rfloor})$. Again, for $n$ large (such that ${\lfloor nt^1_{i_1}\rfloor}+1\leq {\lfloor n\alpha^2_n\rfloor}-{\lfloor n\alpha^1_n\rfloor}$), $\big(S^{\lfloor n\alpha^1_n\rfloor}_n(t^1_1),\cdots,S^{\lfloor n\alpha^1_n\rfloor}_n(t^1_{i_1})\big)$ is $\sigma(\mathcal T^{{\lfloor n\alpha^1_n\rfloor}}(S)_j, j\leq {\lfloor n\alpha^2_n\rfloor}-{\lfloor n\alpha^1_n\rfloor})$-measurable and therefore is independent of $\big(S^{\lfloor n\alpha^2_n\rfloor}_n,\cdots,S^{\lfloor n\alpha^p_n\rfloor}_n\big)$. By induction on $p$, for $n$ large enough, 
$$\big(S^0_n(t^0_1),\cdots,S^0_n(t^0_{i_0})\big),\ \ \big(S^{\lfloor n\alpha^1_n\rfloor}_n(t^1_1),\cdots,S^{\lfloor n\alpha^1_n\rfloor}_n(t^1_{i_1})\big),\cdots,\big(S^{\lfloor n\alpha^p_n\rfloor}_n(t^p_{1}),\cdots,S^{\lfloor n\alpha^p_n\rfloor}_n(t^p_{i_p})\big)$$ are independent and this yields the convergence in law of
$$\bigg(S^0_n(t^0_1),\cdots,S^0_n(t^0_{i_0}),S^{\lfloor n\alpha^1_n\rfloor}_n(t^1_1),\cdots,S^{\lfloor n\alpha^1_n\rfloor}_n(t^1_{i_1}),\cdots,S^{\lfloor n\alpha^p_n\rfloor}_n(t^p_{1}),\cdots,S^{\lfloor n\alpha^p_n\rfloor}_n(t^p_{i_p})\bigg)$$
to 
$$\bigg(X_0(t^0_1),\cdots,X_0(t^0_{i_0}),X_1(t^1_1),\cdots,X_1(t^1_{i_1}),\cdots,X_{p}(t^p_{1}),\cdots,X_{p}(t^p_{i_p})\bigg).$$
\noindent Thus the convergence of the finite dimensional marginals holds and the proof is completed.
\subsection{Proof of Proposition \ref{feda}.}
To prove part (i), we need the following lemma which may be found in \cite{MR773850} page 32 in more generality:
\begin{lemma}\label{hmoum}
If $(u_{k,n})_{k,n\in\N}$ is a nonegative and bounded doubly indexed sequence such that for all $k, \lim_{n\rightarrow\infty} u_{k,n}=0$, then there exists a nondecreasing sequence $(k_n)_n$ such that $\lim_{n\rightarrow\infty}k_n=+\infty$ and $\lim_{n\rightarrow\infty} u_{k_n,n}=0$.
\end{lemma}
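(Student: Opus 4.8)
The plan is to run a straightforward diagonal extraction. For each fixed $j\geq 1$, the hypothesis $\lim_{n\to\infty}u_{j,n}=0$ furnishes an integer $N_j$ such that $u_{j,n}\leq 1/j$ for every $n\geq N_j$. First I would arrange for the thresholds to be strictly increasing: replacing $N_j$ by $\max(N_j,N_{j-1}+1)$ inductively does not affect the defining inequality, so we may and do assume $N_1<N_2<\cdots$, and in particular $N_j\to\infty$ as $j\to\infty$.

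Next I would define the sequence $(k_n)_n$ by reading off which threshold $n$ has last passed: set $k_n=1$ for $n<N_1$, and $k_n=j$ whenever $N_j\leq n<N_{j+1}$. By construction $(k_n)_n$ takes values in $\N$ and is nondecreasing, and since $N_j\to\infty$ the value $k_n$ eventually exceeds any prescribed integer, so $\lim_{n\to\infty}k_n=+\infty$.

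It then remains to check $\lim_{n\to\infty}u_{k_n,n}=0$. If $N_j\leq n<N_{j+1}$ then $k_n=j$ and $n\geq N_j$, so by the choice of $N_j$ we get $0\leq u_{k_n,n}=u_{j,n}\leq 1/j=1/k_n$. Thus $0\leq u_{k_n,n}\leq 1/k_n$ for all $n\geq N_1$, and letting $n\to\infty$ forces $u_{k_n,n}\to 0$.

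I do not expect any genuine obstacle here: this is the classical diagonalization, and the only points requiring a little care are making the thresholds $N_j$ strictly increasing without losing the bound $u_{j,n}\le 1/j$, and fixing a harmless convention for the finitely many initial indices $n<N_1$. Note also that the boundedness assumption plays no role in this particular statement; it is needed only for the more general version in the reference cited.
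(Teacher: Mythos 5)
Your proof is correct and follows essentially the same diagonal-extraction argument as the paper: choose increasing thresholds $N_j$ past which $u_{j,n}$ is small, set $k_n$ equal to the last threshold index passed, and bound $u_{k_n,n}$ by a quantity tending to zero (you use $1/j$ where the paper uses $2^{-p}$). The observation that boundedness is not needed is also accurate.
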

\begin{proof}
By induction on $p$, we construct an increasing sequence $(n_p)_{p\in\N}$ such that $u_{p,n}<2^{-p}$ for all $n\geq n_p$. Now define 
$$\begin{array}{l}
k_n=\begin{cases}
n&\text{if}\  0\leq n\leq n_0\\
p\ &\text {if}\ n_p\leq n<n_{p+1}\ \textrm{for some}\ p\in\N.\\
\end{cases}
\end{array}$$
Clearly $n\longmapsto k_n$ is nondecreasing and $\lim_{n\rightarrow\infty}k_n=+\infty$. Moreover for all $p$ and $n\geq n_p$, we have $u_{k_n,n}<2^{-p}$. Thus for all $p$, $0\leq \limsup_{n\rightarrow\infty}u_{k_n,n}\leq 2^{-p}$ and since $p$ is arbitrary, the lemma is proved.
\end{proof}
\noindent The previous lemma applied to 
$$u_{k,n}=E[d_U(S^{n,k},B^k)\wedge 1],$$
guarantees the existence of a nondecreasing sequence $(\alpha^0_n)_n$ with values in $\N$ such that $\lim_{n\rightarrow\infty}\alpha^0_n=+\infty$ and 
\begin{equation}\label{ishake}
\lim_{n\rightarrow\infty}\big(S^{n,\alpha^0_n}-B^{\alpha^0_n}\big)=0\ \textrm{in probability in}\ \mathbb W.
\end{equation}
Now set
$$V^n=\bigg(B^{\alpha^0_n},S^{n,{\alpha^0_n}},B^{\alpha^0_n+1},S^{n,\alpha^0_n+1},\cdots,B^{\alpha^0_n+h},S^{n,\alpha^0_n+h},\cdots\bigg).$$
Using the same idea as in Section \ref{tawn} and the relation (\ref{ishake}), we prove that for all $j\in\N$,
\begin{equation}\label{ishak}
\lim_{n\rightarrow\infty}\big(S^{n,\alpha^0_n+j}-B^{\alpha^0_n+j}\big)=0\ \textrm{in probability in}\ \mathbb W.
\end{equation}
Equivalently : for all $j\in\N$,
$$\lim_{n\rightarrow\infty}u^0_{j,n}=0\ \ \textrm{where}\ \ u^0_{j,n}=E\big[d_U(S^{n,\alpha^0_n+j},B^{\alpha^0_n+j})\wedge 1\big].$$
By Lemma \ref{hmoum} again, there exists a nondecreasing sequence $(\beta^0_n)_n$ with values in $\N$ such that $\lim_{n\rightarrow\infty}\beta^0_n=+\infty$ and 
\begin{equation}\label{ishakee}
\lim_{n\rightarrow\infty}\big(S^{n,\alpha^0_n+\beta^0_n}-B^{\alpha^0_n+\beta^0_n}\big)=0\ \textrm{in probability in}\ \mathbb W.
\end{equation}
Define $\alpha^1_n=\alpha^0_n+\beta^0_n.$ Now using (\ref{ishakee}) and the same preceding idea, we construct $\alpha^2$ and all the $(\alpha^i)_i$ by the same way. Thus part (i) of Proposition \ref{feda} is proved.\\
\noindent To prove (ii), write 
$$\mathcal T(S^n)_{j+1}-\mathcal T(S^n)_{j}=\textrm{sgn}(S^n_{j+\frac{1}{2}})(S^n_{j+2}-S^n_{j+1}).$$
Thus for each, $k\geq 1$ and $i\geq 1$, 

$$\mathcal T^k(S^n)_i=\sum_{j=0}^{i-1}P^{n,k,j}(S^n_{j+k+1}-S^n_{j+k}),\ \ \textrm{with}\ \ P^{n,k,j}=\prod_{l=1}^{k}\textrm{sgn}(\mathcal T^{k-l}(S^n)_{j+l-\frac{1}{2}}).$$
Denote by $(\mathcal F_t)_{t\geq 0}$ the natural filtration of $B$, then $P^{n,k,j}$ is the product of $k$ random signs which are $\mathcal F_{T^n_{j+k}}$-measurable. This yields

$$\mathbb E\big[P^{n,k,j}(S^n_{j+k+1}-S^n_{j+k})|\mathcal F_{T^n_{j+k}}\big]=P^{n,k,j} \mathbb E\big[\sqrt{n}(B_{T^n_{j+k+1}}-B_{T^n_{j+k}})|\mathcal F_{T^n_{j+k}}\big]=0.$$
Consequently $E[\mathcal T^k(S^n)_i|\mathcal F_{T^n_{j+k}}]=0$ and a fortiori
$$\mathbb E\big[S^{n,k}(t)|\mathcal F_{T^n_{k}}\big]=0\ \ \textrm{for all}\ \ n, k\ \textrm{and}\ t.$$
\noindent Suppose there exists $t>0$ (which is fixed from now) such that $\lim_{n\rightarrow\infty}\big(S^{n,{h_n}}(t)-B^{{h_n}}_t\big)=0$ in probability; we will show that $\frac{h_n}{n}$ must tend to $0$. By Burkholder's inequality, we have
$$\mathbb E[|S_j|^p]\leq C_p \mathbb E[\big(S_1^2+(S_2-S_1)^2+\cdots+(S_j-S_{j-1})^2\big)^{\frac{p}{2}}]=C_p j^{\frac{p}{2}}.$$
\noindent Hence the $L^p$-norm of $S^{n,h_n}(t)$ is bounded uniformly in $n$ and the same is true for the $L^p$-norm of $B^{h_n}_t$ because $B^{h_n}$ is a Brownian motion. As a consequence, $S^{n,h_n}(t)-B^{h_n}_t$ tends to $0$ also in $L^p$-spaces.\\
From $\mathbb E\big[S^{n,h_n}(t)|\mathcal F_{T^n_{h_n}}\big]=0$ and using the $L^2$-continuity of conditional expectations, we get
$$\mathbb E\big[B^{h_n}_t|\mathcal F_{T^n_{h_n}}\big]\rightarrow 0\ \textrm{in}\ L^2.$$
Since $M_s=B^{h_n}_{t\wedge s}$ is a square-integrable $\mathcal F$-martingale; we have $E[B^{h_n}_t|\mathcal F_{T^n_{h_n}}]=B^{h_n}_{t\wedge T^n_{h_n}}$ and $B^{h_n}_{t\wedge T^n_{h_n}}$ must therefore tend to $0$ in $L^2$. So
$$0=\lim_{n\rightarrow\infty}\mathbb E\big[\big(B^{h_n}_{t\wedge T^n_{h_n}}\big)^2\big]=\lim_{n\rightarrow\infty}\mathbb E[{t\wedge T^n_{h_n}}].$$
This means that $T^n_{h_n}\rightarrow0$ in probability. Now recall the following
\begin{lemma} (see \cite{MR0345224} page 39.)
 The sequence of continuous-time processes $(\Delta^n)_n$ defined by
 $$\Delta^n(t)=\frac{k}{n}\ \textrm{for}\ t\in[T^n_k,T^n_{k+1}[$$
 converges uniformly on compacts in probability to the identity process $t$.
\end{lemma}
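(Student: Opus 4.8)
The plan is to reduce the statement to the strong law of large numbers for the renewal process built from the exit times $T^n_{k+1}-T^n_k$, and then to promote convergence at fixed times to uniform convergence on compacts via the monotonicity of $t\mapsto\Delta^n(t)$.

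First I would record the underlying distributional identity. By the strong Markov property of $B$ at the times $T^n_k$ together with Brownian scaling, the increments $n(T^n_k-T^n_{k-1})$, $k\geq1$, are i.i.d., each distributed as the exit time $\xi$ of a standard Brownian motion from $(-1,1)$; applying optional stopping to the martingale $(B_t^2-t)_{t\geq0}$ gives $\mathbb{E}[\xi]=1$. Writing $\Sigma_m=\xi_1+\cdots+\xi_m$ for a random walk with i.i.d.\ mean-one steps, it follows that $(nT^n_k)_{k\geq0}$ has the same law as $(\Sigma_k)_{k\geq0}$, and hence, since $\Delta^n(t)=\tfrac1n\max\{k\geq0:T^n_k\leq t\}$ is a deterministic functional of the sequence $(T^n_k)_k$, the process $(\Delta^n(t))_{t\geq0}$ has the same law as $\bigl(\tfrac1n N(nt)\bigr)_{t\geq0}$, where $N(s)=\max\{k\geq0:\Sigma_k\leq s\}$ is the ordinary renewal counting process.

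Next, for each fixed $t>0$ I would apply the elementary renewal law of large numbers: from $\Sigma_{N(s)}\leq s<\Sigma_{N(s)+1}$ and $\Sigma_m/m\to1$ a.s.\ (SLLN), a sandwiching argument yields $N(s)/s\to1$ a.s.\ as $s\to\infty$; taking $s=nt$ gives $\tfrac1n N(nt)\to t$ a.s., and therefore $\Delta^n(t)\to t$ in probability for every fixed $t\geq0$.

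Finally I would upgrade to uniform convergence on compacts. Fix $\tau>0$ and $\varepsilon>0$ and pick a partition $0=s_0<s_1<\cdots<s_N=\tau$ with $\max_i(s_{i+1}-s_i)<\varepsilon$. Since $t\mapsto\Delta^n(t)$ is nondecreasing and the limit is the increasing function $t$, on the event $\{\,|\Delta^n(s_i)-s_i|<\varepsilon\ \text{for all}\ i\leq N\,\}$ one has, for $t\in[s_i,s_{i+1}]$, both $\Delta^n(t)\leq\Delta^n(s_{i+1})<s_{i+1}+\varepsilon\leq t+2\varepsilon$ and $\Delta^n(t)\geq\Delta^n(s_i)>s_i-\varepsilon\geq t-2\varepsilon$, so that $\sup_{0\leq t\leq\tau}|\Delta^n(t)-t|\leq2\varepsilon$ there; by the previous step this event has probability tending to $1$, which gives the claim. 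There is no serious obstacle here --- the result is classical --- the only point needing a little care being this last passage from fixed-time convergence to uniform convergence, which is precisely where the monotonicity of $\Delta^n$ is used.
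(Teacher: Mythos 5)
Your proof is correct. Note that the paper does not actually prove this lemma --- it only cites Freedman's book (reference \cite{MR0345224}, page 39) --- so there is no internal argument to compare against; your renewal-theoretic proof is a legitimate self-contained substitute. The two ingredients are sound: the scaling/strong-Markov identification of $(nT^n_k)_{k\geq 0}$ with a mean-one renewal sequence (giving $\Delta^n(\cdot)\overset{d}{=}\tfrac1n N(n\cdot)$ and hence pointwise convergence in probability via the elementary renewal SLLN), and the upgrade to uniform convergence on compacts using monotonicity of $\Delta^n$ together with continuity and monotonicity of the limit $t\mapsto t$ --- the standard P\'olya-type argument, which is exactly the step that needs the care you give it.
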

\noindent This Lemma implies that $\Delta^n(T^n_{h_n})\rightarrow0$ in probability. But $\Delta^n(T^n_{h_n})=\frac{h_n}{n}$, so that $\frac{h_n}{n}\rightarrow0.$

\section{Concluding remarks.}\label{ra}
We first notice that with a little more work, Theorem \ref{g}(ii) remains true when the Cs\'aki-Vincze transform is replaced with the Dubins-Smorodinsky, Fujita transform or any other \textquotedblleft reasonable \textquotedblright discrete version. Concerning Proposition \ref{feda}, it is clear that there is no contradiction between the two statements. In fact, by the proof of Lemma \ref{hmoum}, the sequences $(\alpha_i)_{i\in\N}$ are constructed such that $0\leq \alpha^0_n\leq n$ and $0\leq \alpha^i_n-\alpha^{i-1}_n\leq n$ for all $i$ and $n$. Let us now explain our interest in relation (\ref{em}). Suppose there exists $(h_n)_n$ with $\frac{h_n}{n}\rightarrow\infty$ and such that (\ref{em}) is satisfied. Then using the convergence of $S^{n,0}$ to $B$ and Corollary \ref{ko} (ii) applied to $S^n$, we can show that $(B,B^{h_n})$ converges in law to a $2$-dimensional Brownian motion. This is equivalent (see Proposition 17 in \cite{MR54301010}) to the ergodicity of the continuous L\'evy transformation on path space. Corollary \ref{yeux} asserts that this convergence holds in discrete time. However as proved before, such sequence $(h_n)_n$ does not exist and so the possible ergodicity of $T$ cannot be established by arguments involving assymptotics of $\mathcal T^n$. Thus the impression that a thorough study of good discrete versions could lead to a better understanding of the conjectured ergodicity of $T$ may be misleading.\\

\bibliographystyle{plain}
\bibliography{Bil4}

\begin{thebibliography}{1}

\bibitem{MR773850}
H.~Attouch.
\newblock {\em Variational convergence for functions and operators}.
\newblock Applicable Mathematics Series. Pitman (Advanced Publishing Program),
  Boston, MA, 1984.

\bibitem{MR1231991}
Lester~E. Dubins and Meir Smorodinsky.
\newblock The modified, discrete, {L}\'evy-transformation is {B}ernoulli.
\newblock In {\em S\'eminaire de {P}robabilit\'es, {XXVI}}, volume 1526 of {\em
  Lecture Notes in Math.}, pages 157--161. Springer, Berlin, 1992.

\bibitem{MR838085}
S.~Ethier and T.~Kurtz.
\newblock {\em Markov processes: Characterization and convergence. Wiley Series
  in Probability and Mathematical Statistics.}
\newblock 1986.

\bibitem{MR2417970}
Takahiko Fujita.
\newblock A random walk analogue of {L}\'evy's theorem.
\newblock {\em Studia Sci. Math. Hungar.}, 45(2):223--233, 2008.

\bibitem{MR50101010}
Hatem Hajri.
\newblock {D}iscrete approximations to solution flows of {T}anaka's {S}{D}{E}
  related to {W}alsh {B}rownian motion.
\newblock {\em S\'eminaire de probabilit\'e, available at
  http://arxiv.org/pdf/1101.1724.pdf}, 2012.

\bibitem{MR0345224}
Kiyosi It{\^o} and Henry~P. McKean, Jr.
\newblock {\em Diffusion processes and their sample paths}.
\newblock Springer-Verlag, Berlin, 1974.
\newblock Second printing, corrected, Die Grundlehren der mathematischen
  Wissenschaften, Band 125.

\bibitem{MR2168855}
P.~R{\'e}v{\'e}sz.
\newblock {\em Random walk in random and non-random environments. Second
  edition}, volume 117.
\newblock 2005.

\bibitem{MR54301010}
Prokaj Vilmos.
\newblock {S}ome sufficient conditions for the ergodicity of the {L}\'evy
  transformation.
\newblock {\em S\'eminaire de probabilit\'e, available at
  http://arxiv.org/pdf/1206.2485.pdf}, 2012.

\end{thebibliography}

\end{document}